\def\NAT@def@citea{\def\@citea{\NAT@separator}}%
\newtheorem{assumption}[theorem]{Assumption}
\newtheorem{corrolary}[theorem]{Corollary}
\newcommand{\rset}{\mathbb{R}}
\providecommand{\norm}[1]{\lVert#1\rVert}
\newcommand{\be}{\begin{equation}}
\newcommand{\ee}{\end{equation}}
\newcommand{\eref}[1]{(\ref{#1})}
\newcommand\pdffig[4][7cm]{
	\begin{figure}[t]
		\centering
		\includegraphics[width=#1]{#2}
		\caption{#3}
		\label{#4}
	\end{figure}
}
\begin{document}

\title{Stochastic proximal splitting algorithm for composite minimization}


\author{Andrei Patrascu \and Paul Irofti} 
	\institute{A. Patrascu and P. Irofti are with the Research Center for Logic, Optimization and Security (LOS), Department of Computer Science, Faculty of Mathematics and Computer Science, University of Bucharest. (e-mails: andrei.patrascu@fmi.unibuc.ro, paul@irofti.net).
	The research of A. Patrascu was supported by a grant of the Romanian Ministry of Education and Research, CNCS - UEFISCDI, project number PN-III-P1-1.1-PD-2019-1123, within PNCDI III. Also, the research work of P. Irofti was supported by a grant of the Romanian Ministry of
	Education and Research, CNCS - UEFISCDI, project number
	PN-III-P1-1.1-PD-2019-0825, within PNCDI III.}

\date{Received: date / Accepted: date}

\maketitle

\begin{abstract}
Supported by the recent contributions in multiple branches, the first-order splitting algorithms became central for structured nonsmooth optimization. In the large-scale or noisy contexts, when only stochastic information on the objective function is available, the extension of proximal gradient schemes to stochastic oracles is heavily based on the tractability of the proximal operator corresponding to nonsmooth component, which has been deeply analyzed in the literature. However, there remained some questions about the difficulty of the composite models where the nonsmooth term is not proximally tractable anymore. Therefore, in this paper we tackle composite optimization problems, where the access only to stochastic information on both smooth and nonsmooth components is assumed, using a stochastic proximal first-order scheme with stochastic proximal updates. We provide sublinear $\mathcal{O}\left( \frac{1}{k} \right)$ convergence rates (in expectation of squared distance to the optimal set) under the strong convexity assumption on the objective function. Also, linear convergence is achieved for convex feasibility problems. The empirical behavior is illustrated by numerical tests on parametric sparse representation models.
\end{abstract}

\section{Introduction}

In this paper we consider the following convex composite optimization problem:
\begin{align*}
\min\limits_{x \in \rset^n} & \;\;  f(x) + h(x),
\end{align*}
where $f$ is the smooth component and $h$ is a proper, convex, lower-semicontinuous function.
In literature, many applications from statistics \cite{RosVil:14} or signal processing often motivate noisy contexts allowing access only to stochastic first order information on smooth function $f$, having regularizer $h$ as a typical proximally-tractable convex function. By proximally-tractable we mean that the proximal map of a given function is computable in closed form or, at most, in linear time. Therefore, in these situations the following stochastic model 
\begin{align*}
\min\limits_{x \in \rset^n} & \;\; \mathbb{E}[f(x;\xi)] + h(x),
\end{align*}
where $\xi$ is a random variable,
express better the previous real assumptions.
However, the recent dimensionality inflation of machine learning \cite{HaLes:15,ZhoKwo:14,ShiLin:15} and signal processing \cite{StoIro:19, ShiLin:15} models gave birth to optimization problems with complicated regularizers or complicated many constraints. As practical examples we recall: parametric sparse represention \cite{StoIro:19}, group lasso \cite{ZhoKwo:14, ShiLin:15,HaLes:15}, CUR-like factorization \cite{WanWan:17}, graph trend filtering \cite{SalBia:17,VarLee:19}, dictionary learning \cite{YanEla:16, StoIro:19}. Motivated by all these models, in this paper instead of assuming proximal-tractable regularizer $h$, we consider that $h$ is expressed as an expectation of stochastic proximally-tractable components $h(\cdot;\xi)$ (i.e. $h(x) = \mathbb{E}[h(x;\xi)]$). Thus we focus on the following model:
\begin{align}
\label{problem_intro}
\min\limits_{x \in \rset^n} & \;\;  F(x) := \mathbb{E}_{\xi \in \Omega} [f(x;\xi)] + \mathbb{E}_{\xi \in \Omega} [h(x;\xi)],
\end{align}
where $\xi$ is a random variable associated with probability space $(\mathbb{P},\Omega)$. Functions $f(\cdot;\xi): \rset^n \to \rset^{}$ are smooth with Lipschitz gradients and $h(\cdot, \xi): \rset^n \mapsto (-\infty, + \infty]$ are proper convex and lower-semicontinuous, $\mathbb{E}[\cdot]$ is the expectation over respective random variable. 
In general, many existing primal schemes encounter computational difficulties when a large (possibly infinite) number  of constraints are present, since they are based on full projections onto complicated feasible set.

\vspace{5pt}

\noindent \textbf{Contributions}. $(i)$ We analyze a stochastic first-order splitting scheme relying on stochastic gradients and stochastic proximal updates, which naturally generalize the widely known Stochastic Gradient Descent (SGD) and Stochastic Proximal Point (SPP) algorithms toward composite models with untractable regularizations; $(ii)$ we provide $\mathcal{O}(\frac{1}{k})$ iteration complexity estimates which were previously unknown for this type of schemes. In particular, for convex feasibility problems, the analysis yields naturally linear convergence rates.

\vspace{5pt}

\noindent We briefly recall further the milestone results from stochastic optimization literature with focus on the complexity of stochastic first-order methods.

\subsection{Previous work}

Great attention has been given in the last decade to the behaviour of stochastic first order schemes, with special focus in stochastic gradient descent (SGD), on a variety of models under different convexity properties, see \cite{NemJud:09,MouBac:11,ShaSin:11,NguNgu:18,RosVil:14,Ned:10,Ned:11}. Since the analysis of SGD naturally require a typical smoothness, appropriate extensions are necessary to attack nonsmooth models. These extensions are embodied by the stochastic proximal point (SPP) algorithm, which has been recently analyzed using various differentiability assumptions, see \cite{TouTra:16,RyuBoy:16,Bia:16,PatNec:18,KosNed:13,WanBer:16,AsiDuc:18} and has shown surprinsing analytical and empirical performances.
In \cite{TouTra:16} is considered the typical  stochastic learning model involving the expectation of random particular components $f(\cdot;\xi)$ defined by the composition of a smooth function and a linear operator, i.e.: $  f(x;\xi) = \ell(a^T_{\xi} x), $
where $a_\xi \in \rset^n$.  Their complexity analysis requires smoothness and strong convexity to obtain in the quadratic mean and
an $\mathcal{O}\left(\frac{1}{k^{\gamma}} \right)$  convergence rate, using vanishing stepsize. The generalization of these convergence guarantees is undertaken in \cite{PatNec:18}, where no linear composition structure is required and an (in)finite number of constraints are included in the stochastic model, i.e. $h(\cdot;\xi) = \mathbb{I}_{X_{\xi}}(\cdot)$. However, the analysis of \cite{PatNec:18} requires strong convexity and Lipschitz gradient continuity for each functional component $f(\cdot;\xi)$. Note that our analysis surpasses this restriction and provides a natural generalization of \cite{PatNec:18} to nonsmooth composite models. Further, in  \cite{Bia:16} a  general asymptotic convergence analysis of slightly modified SPP scheme has been provided, under mild convexity assumptions on a finitely constrained stochastic problem.  In \cite{WanBer:16} similar SPP algorithms are developed, which are also tailored for complicated feasible sets. The authors focus on mild convex models deriving optimal $\mathcal{O}(1/\sqrt{k})$ rates. 
Recently, in \cite{AsiDuc:18}, the authors analyze SPP schemes for stochastic models with "shared minimizers" obtaining linear convergence results, for variable stepsize SPP. Also, without shared minimizers assumption, they obtain for SPP $\mathcal{O}\left( \frac{1}{\sqrt{k}} \right)$ in convex Lipschitz continuous case and, furthermore, $\mathcal{O}\left( \frac{1}{k} \right)$ in strongly convex case. 

\vspace{5pt}

\noindent Splitting first-order schemes received significantly attention due to their natural insight and simplicity in contexts when a sum of two components are minimized (see \cite{Nes:13,BecTeb:09}). However, when the information access is limited only to stochastic samples of the two components, extending the existing guarantees is not straightforward. 
Notice that overall, on one hand, SPP avoids any splitting in composite models by treating the constrained smooth problems as black box expectations (see \cite{TouTra:16,PatNec:18,AsiDuc:18}). On the other hand, until recently the composite nonsmooth models assumed proximal-tractable $h$ in order to extend proof arguments of the results related to stochastic smooth optimization \cite{RosVil:14}. Therefore, only recently the full stochastic composite models with stochastic regularizers have been properly tackled \cite{SalBia:17}, where almost sure asymptotic convergence is established for a stochastic splitting scheme, where each iteration represents a typical proximal gradient update with respect to stochastic samples of $f$ and $h$. In our paper we analyze the nonasymptotic behaviour of this scheme and point the relations with other algorithms from the literature.

The stochastic splitting schemes are also related to the model-based methods developed in \cite{DavDru:19}. Here, the authors developed a unified algorithmic framework, which generates stochastic algorithms, for different models arising in learning applications. They assume their composite objective function to be the sum of a (weakly convex) stochastic component with bounded gradients and simple (proximally tractable) convex regularization. Although their framework is algorithmically more general, our analysis avoid these boundedness assumptions, allows objectives with a component having Lipschitz continuous gradient and do not require proximal tractability on regularizations.



\vspace{5pt}

\noindent \textbf{Notations}. We use notation $[m] =\{1,\cdots, m\}$. For $x,y \in \rset^n$ denote the scalar product  $\langle x,y \rangle = x^T y$ and Euclidean norm by $\|x\|=\sqrt{x^T x}$. The projection operator onto set $X$ is denoted by $\pi_X$ and the distance from $x$ to the set $X$ is denoted $\text{dist}_X(x) = \min_{z \in X} \norm{x-z}$. 
The indicator function of a set $X$ is denoted: $\mathbb{I}_{X}(x) = \begin{cases}
0, & \text{if} \; x \in X \\
\infty , & \; \text{otherwise}
\end{cases}$.
We use notations $\partial h(x;\xi)$ for the subdifferential set and $g_h(x;\xi)$ for a subgradient  of $h(\cdot;\xi)$ at $x$. In differentiable case, $\nabla f(\cdot;\xi)$ is the gradient of component $\xi$. Finally, we use $\mathbb{E}[\cdot]$ for (conditional) expectation operator.

\subsection{Preliminaries}
We denote the set of optimal solutions with $X^*$ and $x^*$ any optimal point for \eqref{problem_intro}. 
\begin{assumption} \label{assump_basic}
The objective function of our main problem \eqref{problem_intro} satisfies:

\noindent $(i)$ The function $f(\cdot;\xi)$ has $L_f$-Lipschitz gradient, i.e. there exists $L_f > 0$ such that:
\begin{align*}
\norm{\nabla f(x;\xi) - \nabla f(y;\xi)} \le L_f \norm{x-y}, \qquad \forall  x,y \in \rset^n, \xi \in \Omega.
\end{align*}
and $f$ is $\sigma_f-$strongly convex,	i.e. there exists $\sigma_f > 0$ satisfying:
\begin{align}\label{strong_convexity_on_f}
f(x) \ge f(y) + \langle \nabla f(y), x-y\rangle + \frac{\sigma_f}{2}\norm{x-y}^2 \qquad \forall x,y \in \rset^n.
\end{align}



\noindent $(ii)$ There exists subgradient mappings $g_F(\cdot;\xi): \text{dom} \; F(\cdot;\xi) \times \Omega \mapsto \rset^n$ such that $g_F(x;\xi) \in \partial F(x;\xi), \forall \xi \in \Omega$ and $\mathbb{E}[g_F(x;\xi)] \in \partial F(x).$ 

\noindent $(iii)$  $F(\cdot;\xi)$ has bounded gradients on the optimal set: there exists $\mathcal{S}^*_F \ge 0$ such that $\mathbb{E}\left[\norm{g_F(x^*;\xi)}^2\right]  \le \mathcal{S}^*_F  < \infty$ for all $x^* \in X^*$.

\noindent $(iv)$ For any $g_F(x^*) \in \partial F(x^*)$ there exists bounded subgradients $g_F(x^*;\xi) \in \partial F(x^*;\xi),$ such that $ \mathbb{E}[g_F(x^*;\xi)] = g_F(x^*)$ and $\mathbb{E}[\norm{g_F(x^*;\xi)}^2]<\mathcal{S}^*_F$. Moreover, for simplicity, we assume throughout the paper $g_F(x^*) = \mathbb{E}[g_F(x^*;\xi)] = 0$.
\end{assumption}

\noindent The first part of the above assumption is natural in the stochastic optimization problems. The Assumption $(ii)$ guarantee the existence of a subgradient mapping. The third part Assumption $(iii)$ is standard in the literature related to proximal stochastic algorithms. Notice that most stochastic subgradient algorithms require bounded gradients on the entire domain \cite{MouBac:11,NemJud:09,NguNgu:18}, which is more restrictive than condition $(iii)$.  The assumption $(iv)$ needs a more consistent discussion, detailed in \cite{Pat:20}. However, for completeness we include it in the remark below.

\begin{remark}
Denote the set $\mathbb{E}[\partial F(\cdot;\xi)] = \left\{\mathbb{E}[g_F(\cdot;\xi)] \;|\; g_F(\cdot;\xi) \in \partial F(\cdot;\xi)  \right\}$. In general, for convex functions it can be easily shown $\mathbb{E}\left[ \partial F(x;\xi)\right] \subseteq \partial F(x)$ for all $x \in \text{dom}(F)$ (see \cite{RocWet:82}). However, $(iv)$ is guaranteed by the stronger equality: \begin{align}\label{diff_equality}
\mathbb{E}\left[ \partial h(x;\xi)\right] = \partial h(x).
\end{align}

\noindent \textbf{Discrete case.} Let us consider finite discrete domains $\Omega = \{1, \cdots, m\}$. Then \cite[Theorem 23.8]{Roc:70} guarantees that the finite sum  objective function from \eqref{problem_intro} satisfy \eqref{diff_equality} if $ \bigcap\limits_{\xi \in \Omega}\text{ri}(\text{dom}(h(\cdot;\xi))) \neq \emptyset$.
The $\text{ri}(\text{dom}(\cdot))$ can be further relaxed to $\text{dom}(\cdot)$ for polyhedral components. 
In particular,  let $X_1, \cdots, X_m$ be finitely many closed convex satisfying qualification condition: $\bigcap\limits_{i=1}^m \text{ri}(X_i) \neq \emptyset$, then also \eqref{diff_equality} holds, i.e. $\mathcal{N}_{X}(x) = \sum\limits_{i=1}^m \mathcal{N}_{X_i}(x)$ (see ( by \cite[Corrolary 23.8.1]{Roc:70})). Again, $\text{ri}(X_i)$ can be relaxed to the set itself for polyhedral sets. 
As pointed by \cite{BauBor:99}, the (bounded) linear regularity property of $\{X_i\}_{i=1}^m$ implies the intersection qualification condition. 

Under support of these arguments, observe that $(iv)$ can be easily checked for most finite-sum examples arisen in convex learning problems.

\noindent \textbf{Continuous case.}
In the nondiscrete case, sufficient conditions for \eqref{diff_equality} are discussed in \cite{RocWet:82}. Based on the arguments from \cite{RocWet:82}, an assumption equivalent to $(iv)$ is considered in \cite{SalBia:17} under the name of $2-$integrable representation of $x^*$ (definition in \cite[Section B]{SalBia:17}). On short, if $h(\cdot;\xi)$ is normal convex integrand with full domain then $x^*$  admits an $2-$integrable representation $\{g_h(x^*;\xi)\}_{\xi \in \Omega}$, and implicitly $(iv)$ holds. 

We mention that deriving a more complicated result similar to Lemma \ref{lemma_auxres} we could avoid assumption $(iv)$. However, since $(iv)$ facilitates the simplicity and naturality of our results and while our target applications are not excluded, we assume throughout the paper that $(iv)$ holds. 
\end{remark}


\noindent Let closed convex sets $\{X_{\xi}\}_{\xi \in \Omega}$ and $X = \bigcap_{\xi \in \Omega} X_{\xi}$, then a favorable "conditioning" property for most projection methods is the linear regularity property: there exists $\kappa > 0$ such that
\begin{align}\label{linear_regularity}
\mathbb{E}[\text{dist}_{X_{\xi}}^2(x)] \ge \kappa \text{dist}_{X}^2(x) \qquad \forall x \in \rset^n.
\end{align}


\noindent Given some smoothing parameter $\mu>0$, define Moreau envelope of $h(x;\xi)$ and the prox operator as follows:
\begin{align*}
 h_{\mu} (x;\xi) & := \min\limits_{z \in \rset^n}  \;  h(z;\xi) + \frac{1}{2\mu} \norm{z - x}^2 \\
 \text{prox}_{h,\mu}(x;\xi) & := \arg\min\limits_{z \in \rset^n} h(z;\xi) + \frac{1}{2\mu} \norm{z - x}^2.
\end{align*} 
The approximate $h_{\mu}(\cdot;\xi)$ inherits the same convexity properties of $h(\cdot;\xi)$ and additionally has Lipschitz continuous gradient with constant $\frac{1}{\mu}$, see \cite{RocWet:98}. In particular, when $h(x;\xi) = \mathbb{I}_{X_{\xi}}(x)$ the prox operator becomes the projection operator $\text{prox}_{h,\mu}(x;\xi) = \pi_{X_{\xi}}(x)$.

\section{Stochastic Splitting Proximal Gradient Algorithm}

\noindent In the following section we present the Stochastic Splitting Proximal Gradient (SSPG) algorithm and analyze its nonasymptotic convergence towards the optimal set of the original problem \eqref{problem_intro}. The asymptotic convergence of vanishing stepsize SSPG have been analyzed in \cite{SalBia:17}. 

\vspace{5pt}

\noindent Let $x^0 \in \rset^n$ be a starting point and $\{\mu_k\}_{k \ge 0}$ be a nonincreasing positive sequence of
stepsizes.

\begin{flushleft}
\textbf{ Stochastic Splitting Proximal Gradient algorithm (SSPG)}: \quad For $k\geq 0$ compute \\
1. Choose randomly $\xi_k \in \Omega$ w.r.t. probability distribution $\mathbb{P}$\\
2. Update: 
\begin{align*}
y^{k}  & =  x^k - \mu_k \nabla f(x^k;\xi_k) \\
x^{k+1}  & = \text{prox}_{h,\mu_k}(y^{k};\xi_k)
\end{align*}
3. If the stoppping criterion holds, then \textbf{STOP}, otherwise $k = k+1$.
\end{flushleft}


\noindent The SSPG iteration $x^{k+1} = \text{prox}_{h,\mu_k} \left(x^k - \mu_k \nabla f(x^k;\xi_k);\xi_k \right)$ is mainly a Stochastic Proximal Gradient iteration based on stochastic proximal maps \cite{SalBia:17}. Thus, the first step of algorithm SSPG consists of a varying-stepsize (vanilla) stochastic gradient update, while the second step rely on a stochastic proximal update, or equivalently a gradient step in the direction of the randomly sampled gradient of expected Moreau envelope $h_{\mu}(\cdot)$. Further results will state that a diminishing stepsize is an appropriate choice to obtain convergence in expectation.
By varying our central model, this general SSPG scheme recovers several well-known stochastic first order algorithms. 

\noindent $(i)$ In the smooth case ($h = 0$), SSPG reduces to vanilla SGD \cite{MouBac:11}:
$$x^{k+1}  =  x^k - \mu_k \nabla f(x^k;\xi_k).$$
 
\noindent $(ii)$ By considering proximal-tractable regularizers (i.e. $h(\cdot;\xi) = h(\cdot)$) or simple convex sets (i.e. $h(\cdot;\xi) = \mathbb{I}_{X}(\cdot)$, with $\pi_X(\cdot)$ computable in closed form),  then we recover Proximal (or Projected, respectively) SGD \cite{RosVil:14}: 
$$x^{k+1}  =  \text{prox}_{h,\mu_k}\left(x^k - \mu_k \nabla f(x^k;\xi_k)\right) \quad \text{or} \quad x^{k+1}  =  \pi_{X}\left(x^k - \mu_k \nabla f(x^k;\xi_k)\right).$$ 

\noindent $(iii)$ For nonsmooth objective functions, when $f = 0$, SSPG is equivalent with SPP iteration \cite{PatNec:18, AsiDuc:18,TouTra:16}: 
$$x^{k+1}  =  \text{prox}_{h,\mu_k}(x^{k};\xi_k).$$ 

\noindent $(iv)$  For CFPs, i.e. $f = 0, h(\cdot;\xi) = \mathbb{I}_{X_{\xi}}(\cdot)$, the SSPG iteration is simplified since $y^k = x^k$ and  $x^{k+1} = \text{prox}_{h,\mu_k}(x^k;\xi_k)$. Thus it recovers Randomized Alternating Projections scheme \cite{BauDeu:03}:
$$x^{k+1}  =  \pi_{X_{\xi_k}}(x^{k}).$$

\noindent Therefore, the below results will implicitly represent unifying convergence rates for these algorithms, under stated assumptions.


\section{Iteration complexity in expectation}

\noindent In this section we assume that the function $f$ is strongly convex and derive convergence rates for this particular case.
We recall a few elementary inequalities that will be used in our proofs. The proof of the following lemma can be found in \cite{Nes:04}.
\begin{lemma}[\cite{Nes:04}]\label{lemma_Lipschitz}
Under Assumption \ref{assump_basic}$(i)$, the following inequality holds:
\begin{align*}
 f(x;\xi) \le f(y;\xi) + \langle \nabla f(y;\xi), x-y\rangle + \frac{L_f}{2}\norm{x-y}^2 \qquad \forall x,y \in \rset^n.
\end{align*}
\end{lemma}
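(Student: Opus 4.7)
The plan is to derive this standard descent inequality from the fundamental theorem of calculus along the segment joining $y$ and $x$, converting the Lipschitz bound on $\nabla f(\cdot;\xi)$ into a quadratic upper bound on $f(\cdot;\xi)$. First I would write
$f(x;\xi) - f(y;\xi) = \int_0^1 \langle \nabla f(y + t(x-y);\xi), x-y\rangle\, dt$,
which is justified by the differentiability of $f(\cdot;\xi)$ guaranteed implicitly by Assumption \ref{assump_basic}$(i)$.

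Next I would insert $\nabla f(y;\xi)$ by adding and subtracting it in the integrand, which splits the right-hand side into the linear term $\langle \nabla f(y;\xi), x-y\rangle = \int_0^1 \langle \nabla f(y;\xi), x-y\rangle\, dt$ and a remainder
$R := \int_0^1 \langle \nabla f(y + t(x-y);\xi) - \nabla f(y;\xi), x-y\rangle\, dt$.
The task then reduces to showing $R \le \tfrac{L_f}{2}\|x-y\|^2$.

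For the remainder I would apply Cauchy–Schwarz inside the integral and then invoke the uniform-in-$\xi$ Lipschitz condition from Assumption \ref{assump_basic}$(i)$, which gives $\|\nabla f(y+t(x-y);\xi) - \nabla f(y;\xi)\| \le L_f\, t\,\|x-y\|$. Integrating $L_f t \|x-y\|^2$ over $t \in [0,1]$ yields exactly $\tfrac{L_f}{2}\|x-y\|^2$, completing the bound. I do not expect any real obstacle here, as this is the textbook descent lemma reproduced in \cite{Nes:04}; the only point deserving attention is that $L_f$ is the same constant for every $\xi \in \Omega$, so the inequality indeed holds pointwise in $\xi$, matching the form stated in the lemma.
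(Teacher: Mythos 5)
Your proof is correct and is precisely the standard argument from \cite{Nes:04} that the paper defers to (the paper itself gives no proof, only the citation): integrate the gradient along the segment, subtract off $\langle \nabla f(y;\xi), x-y\rangle$, and bound the remainder via Cauchy--Schwarz and the $\xi$-uniform Lipschitz constant to obtain $\int_0^1 L_f t\,\|x-y\|^2\,dt = \tfrac{L_f}{2}\|x-y\|^2$. Your closing remark that the same $L_f$ works for every $\xi\in\Omega$ is exactly the point that makes the pointwise-in-$\xi$ statement valid.
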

A second elementary bound the we will use is: let $v,x,y \in \rset^n$ and $\nu > 0$, then
\begin{align}\label{elemineq_lowbound}
\langle v, x-y\rangle + \frac{1}{2\nu}\norm{x-y}^2 \ge -\frac{\nu}{2}\norm{v}^2.
\end{align}
\noindent We denote the history of index choices by $\Xi_k = \{\xi_0, \cdots, \xi_{k-1}\}$.

\begin{lemma}\label{lemma_recurence}
Let Assumption \ref{assump_basic} hold and $\mu_k \le \frac{1}{2L_f}$. Then the sequence $\{x^k\}_{k \ge 0}$ generated by SSPG satisfies:
\begin{align*}
\mathbb{E}[\; \norm{x^{k+1} - x^*}^2] \le  & \; (1 - \sigma_f \mu_k) \;\mathbb{E}[\; \norm{x^k-x^*}^2] \\ 
& +  2\mu_k \mathbb{E}\left[ F(x^*) - F(x^{k+1};\xi_k) - \frac{1}{4\mu_k} \norm{x^{k+1} - x^k}^2\right].
\end{align*}
\end{lemma}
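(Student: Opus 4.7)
The plan is to start from the prox inequality induced by the second SSPG update. Since $x^{k+1}$ is the unique minimizer of the $\frac{1}{\mu_k}$-strongly convex objective $z \mapsto h(z;\xi_k) + \frac{1}{2\mu_k}\norm{z - y^k}^2$, evaluating the strong-convexity inequality at $u = x^*$ and rearranging yields the clean deterministic bound
\begin{align*}
\norm{x^* - x^{k+1}}^2 \le \norm{x^* - y^k}^2 - \norm{x^{k+1} - y^k}^2 + 2\mu_k\bigl[h(x^*;\xi_k) - h(x^{k+1};\xi_k)\bigr].
\end{align*}
This is the cleanest entry point because it already carries the two squared distances demanded by the conclusion and isolates the stochastic prox contribution into a single $h$-gap.

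Next I would substitute $y^k = x^k - \mu_k \nabla f(x^k;\xi_k)$ into both squared norms. The $\mu_k^2 \norm{\nabla f(x^k;\xi_k)}^2$ cross term cancels between them, leaving
\begin{align*}
\norm{x^* - y^k}^2 - \norm{x^{k+1} - y^k}^2 = \norm{x^* - x^k}^2 - \norm{x^{k+1} - x^k}^2 + 2\mu_k\langle \nabla f(x^k;\xi_k), x^* - x^{k+1}\rangle.
\end{align*}
I would then decompose the inner product through $x^k$, writing $x^* - x^{k+1} = (x^* - x^k) + (x^k - x^{k+1})$, and bound the $x^k - x^{k+1}$ piece via Lemma \ref{lemma_Lipschitz} applied to $f(\cdot;\xi_k)$ to exchange it for $f(x^k;\xi_k) - f(x^{k+1};\xi_k) + \frac{L_f}{2}\norm{x^{k+1} - x^k}^2$. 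Collecting terms produces $f(x^k;\xi_k) + h(x^*;\xi_k) - F(x^{k+1};\xi_k)$ plus the residual inner product $\langle \nabla f(x^k;\xi_k), x^* - x^k\rangle$ and a $(\mu_k L_f - 1)\norm{x^{k+1} - x^k}^2$ remainder.

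I would then take conditional expectation with respect to $\Xi_k$. Since $x^k$ and $x^*$ are $\Xi_k$-measurable, the assumption $\mathbb{E}[\nabla f(x^k;\xi_k)\mid \Xi_k] = \nabla f(x^k)$, $\mathbb{E}[f(x^k;\xi_k)\mid \Xi_k] = f(x^k)$ and $\mathbb{E}[h(x^*;\xi_k)\mid\Xi_k] = h(x^*)$. At this stage the aggregate $\sigma_f$-strong convexity of $f$ activates: $\langle \nabla f(x^k), x^*-x^k\rangle \le f(x^*) - f(x^k) - \frac{\sigma_f}{2}\norm{x^k-x^*}^2$. This cancels the $f(x^k)$ summand and converts $h(x^*) + f(x^*) = F(x^*)$, producing the $(1-\sigma_f\mu_k)\norm{x^k-x^*}^2$ contractive factor together with $2\mu_k\mathbb{E}[F(x^*) - F(x^{k+1};\xi_k)\mid \Xi_k]$.

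Finally, the stepsize condition $\mu_k \le \frac{1}{2L_f}$ yields $\mu_k L_f - 1 \le -\tfrac{1}{2} = 2\mu_k \cdot (-\tfrac{1}{4\mu_k})$, which lets me absorb the residual quadratic into the bracket as $-\frac{1}{4\mu_k}\norm{x^{k+1} - x^k}^2$; the tower property then delivers the stated bound after taking outer expectation. The main delicate point is the asymmetry between the per-component Lipschitz smoothness of $f(\cdot;\xi_k)$ (which I must invoke \emph{before} conditioning, on the $x^k - x^{k+1}$ direction where $\xi_k$-dependence is essential) and the aggregate strong convexity of $f$ (available only \emph{after} conditioning, on the $x^* - x^k$ direction where $x^k$ is $\Xi_k$-measurable). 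Splitting the inner product $\langle \nabla f(x^k;\xi_k), x^* - x^{k+1}\rangle$ at the pivot $x^k$ is precisely what routes each property into the regime in which it is usable.
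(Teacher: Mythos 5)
Your proposal is correct and follows essentially the same route as the paper: the three-point inequality you extract from the strong convexity of the prox subproblem is exactly the inequality the paper obtains from the optimality condition $g_h(x^{k+1};\xi_k) + \frac{1}{\mu_k}(x^{k+1}-y^k)=0$ combined with the subgradient inequality for $h(\cdot;\xi_k)$, and from that point on the two arguments coincide (split the inner product at $x^k$, apply the descent lemma to the $\xi_k$-dependent direction before conditioning, apply aggregate strong convexity after conditioning, and absorb the residual quadratic using $\mu_k \le \frac{1}{2L_f}$).
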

\begin{proof}
First notice that from the optimality conditions of the subproblem $\min_z h(z;\xi) + \frac{1}{2\mu}\norm{z - y}^2$, the following relation holds:
\begin{align}\label{optcond_h}
    g_h(x^{k+1};\xi_k) + \frac{1}{\mu_k}\left(x^{k+1} - y^{k}\right) = 0.
\end{align}	
Further we obtain the main recurrence:
\begin{align}
&\norm{x^{k+1} - x^*}^2 
 = \norm{x^k-x^*}^2  +
2 \langle x^{k+1} - x^k, x^k-x^* \rangle + \norm{x^{k+1} - x^k}^2 \nonumber\\
& = \norm{x^k-x^*}^2 +
2 \langle x^{k+1} - x^k, x^{k+1}-x^* \rangle - \norm{x^{k+1} - x^k}^2 \nonumber\\
& = \norm{x^k-x^*}^2 +
2 \langle \mu_k \nabla f(x^k;\xi_k) + \mu_k g_h(x^{k+1};\xi_k), x^* - x^{k+1} \rangle \!-\! \norm{x^{k+1} - x^k}^2 \nonumber\\
& \le \norm{x^k-x^*}^2 +
2\mu_k  \langle \nabla f(x^k;\xi_k), x^* - x^{k+1} \rangle - \norm{x^{k+1} - x^k}^2 \nonumber\\
& \hspace{5cm} + 2\mu_k \;[\; h(x^*;\xi_k) - h(x^{k+1};\xi_k) \; ] \nonumber\\
& = \norm{x^k-x^*}^2 -
2\mu_k \bigg( \langle \nabla f(x^k;\xi_k), x^{k+1} - x^k \rangle + \frac{1}{4\mu_k}\norm{x^{k+1} - x^k}^2 \label{prelimrecc_rel1} \\
&  + h(x^{k+1};\xi_k)\bigg)  + 2\mu_k \langle \nabla f(x^k;\xi_k), x^* - x^{k}\rangle - \frac{1}{2}\norm{x^{k+1} - x^k}^2 + 2\mu_k h(x^*;\xi_k).\nonumber
\end{align}
By using in \eqref{prelimrecc_rel1} the stepsize bound $\mu_k \le \frac{1}{2L_f}$ and Lemma \ref{lemma_Lipschitz}, we obtain:
\begin{align*}
&\norm{x^{k+1} - x^*}^2 \overset{\mu_k \le \frac{1}{2L_f}}{\le} \norm{x^k-x^*}^2\\
& \hspace{0.5cm}  -2\mu_k \bigg( \langle \nabla f(x^k;\xi_k), x^{k+1} - x^k \rangle + \frac{L_f}{2}\norm{x^{k+1} - x^k}^2 + h(x^{k+1};\xi_k)\bigg) \\
& \hspace{0.5cm} + 2\mu_k \langle \nabla f(x^k;\xi_k), x^* - x^{k}\rangle - \frac{1}{2}\norm{x^{k+1} - x^k}^2 + 2\mu_k h(x^*;\xi_k)\\
& \overset{Lemma \; \ref{lemma_Lipschitz}}{\le} \norm{x^k-x^*}^2 -2\mu_k \bigg(  F(x^{k+1};\xi_k) - f(x^k;\xi_k)\bigg) \\
& \hspace{0.5cm} + 2\mu_k \langle \nabla f(x^k;\xi_k), x^* - x^{k}\rangle - \frac{1}{2}\norm{x^{k+1} - x^k}^2 + 2\mu_k h(x^*;\xi_k).
\end{align*}
Further we take expectation w.r.t. $\xi_k$ in both sides and use the strong convexity property \eqref{strong_convexity_on_f} (with $x = x^k$ and $y = x^*$) to finally derive the following:
\begin{align*}
& \mathbb{E}[\norm{x^{k+1} - x^*}^2 \;|\; \Xi_k] 
 \le  \norm{x^k-x^*}^2 
+ 2\mu_k \mathbb{E}[\left( f(x^k;\xi_k) - F(x^{k+1};\xi_k) \right) \;|\; \Xi_k] \nonumber\\
&\hspace{2cm} + 2\mu_k \langle \nabla f(x^k), x^* - x^{k}\rangle - \frac{1}{2}\mathbb{E}\left[\norm{x^{k+1} - x^k}^2 \;|\; \Xi_k \right] + 2\mu_k h(x^*) \nonumber \\
& \le \norm{x^k-x^*}^2 +
2\mu_k \mathbb{E}\left[ f(x^k;\xi_k) - F(x^{k+1};\xi_k)  \;|\; \Xi_k \right] \nonumber \\
& \hspace{1.5cm} + 2\mu_k \left( F(x^*) - f(x^k) \!-\! \frac{\sigma_f}{2}\norm{x^k-x^*}^2 \right) \!-\! \frac{1}{2}\mathbb{E}\left[\norm{x^{k+1} - x^k}^2 \;|\; \Xi_k \right] \nonumber \\
& \!=\! (1 \!-\! \sigma_f \mu_k)\norm{x^k-x^*}^2 \!+\!  2\mu_k \mathbb{E}\left[ F(x^*) \!-\! F(x^{k+1};\xi_k) \!-\! \frac{1}{4\mu_k} \norm{x^{k+1} \!-\! x^k}^2 \;|\; \Xi_k \right]\!\!.
\end{align*}
Finally, by taking full expectation $\mathbb{E}[\; \cdot \;]$, over the entire index history, we obtain the above result.
\end{proof} 


\noindent Further we present some lower bounds on the second term from the right hand side, which will allow the synthesis of general complexity estimates over stochastic and deterministic contexts.

\begin{lemma}\label{lemma_auxres}
	Given $\mu > 0$, let Assumption \ref{assump_basic} hold. Then $F$ satisfies the following relations: given $k \ge 0$
	\begin{enumerate}
		\item[$(i)$] $ \mathbb{E}\left[F(x^{k+1};\xi_k) - F^* \!+\! \frac{1}{4\mu_k}\norm{x^{k+1} - x^k}^2 \right] \ge - \mu_k \mathbb{E} \left[  \norm{g_F(x^*;\xi)}^2\right].$  	
		\item[$(ii)$] Let $\{X_{\xi}\}_{\xi \in \Omega}$ be convex sets satisfying linear regularity with constant $\kappa$, such that $X := \bigcap_{\xi \in \Omega} X_{\xi} \neq \emptyset$. Also let $h(\cdot;\xi) = \mathbb{I}_{X_{\xi}}(\cdot)$, then  
		\begin{align*}
		& \mathbb{E}\left[F(x^{k+1};\xi_k) - F^* + \frac{1}{4\mu_k}\norm{x^{k+1} - x^k}^2\right] \\ 
		& \hspace{1cm} \ge - 2\mu_k\mathbb{E}\left[\norm{\nabla f(x^*;\xi)}^2 \right] -  \frac{4\mu_k}{\kappa}\norm{\nabla f(x^*)}^2 + \frac{\kappa}{16\mu_k}\mathbb{E}\left[\text{dist}_X^2(x^k)\right]
		\end{align*}
	\end{enumerate}
\end{lemma}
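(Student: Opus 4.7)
For part $(i)$, I would start from convexity of $F(\cdot;\xi_k)$ applied at a subgradient $g_F(x^*;\xi_k)$ chosen as in Assumption~\ref{assump_basic}$(iv)$, giving $F(x^{k+1};\xi_k)-F(x^*;\xi_k)\ge\langle g_F(x^*;\xi_k),x^{k+1}-x^*\rangle$. Splitting $x^{k+1}-x^*=(x^{k+1}-x^k)+(x^k-x^*)$, the term $\langle g_F(x^*;\xi_k),x^k-x^*\rangle$ vanishes in conditional expectation because $\xi_k$ is drawn independently of $\Xi_k$ and $\mathbb{E}[g_F(x^*;\xi_k)]=g_F(x^*)=0$, while the remaining inner product combines with $\tfrac{1}{4\mu_k}\norm{x^{k+1}-x^k}^2$ through the elementary bound~\eqref{elemineq_lowbound} with $\nu=2\mu_k$ to produce the lower bound $-\mu_k\norm{g_F(x^*;\xi_k)}^2$. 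Taking total expectation and using $F^{*}=\mathbb{E}[F(x^*;\xi)]$ yields $(i)$.

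For part $(ii)$, the feasibility structure collapses the objective gap: since $x^{k+1}=\pi_{X_{\xi_k}}(y^{k})\in X_{\xi_k}$ and $x^{*}\in X\subseteq X_{\xi_k}$, both indicator contributions vanish and $\mathbb{E}[F(x^{k+1};\xi_k)-F^{*}]=\mathbb{E}[f(x^{k+1};\xi_k)-f(x^{*};\xi_k)]$. Convexity of $f(\cdot;\xi_k)$ then gives the lower bound $\langle\nabla f(x^*;\xi_k),x^{k+1}-x^*\rangle$, and I would decompose
\begin{align*}
x^{k+1}-x^{*}=(x^{k+1}-x^{k})+(x^{k}-\pi_{X}(x^{k}))+(\pi_{X}(x^{k})-x^{*}),
\end{align*}
treating each contribution against a portion of the quadratic budget $\tfrac{1}{4\mu_k}\norm{x^{k+1}-x^{k}}^2$.

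The first piece is paired with $\tfrac{1}{8\mu_k}\norm{x^{k+1}-x^k}^2$ via~\eqref{elemineq_lowbound} (using $\nu=4\mu_k$), producing $-2\mu_k\norm{\nabla f(x^*;\xi_k)}^2$. The third piece is nonnegative in conditional expectation, since Assumption~\ref{assump_basic}$(iv)$ with $g_F(x^*)=0$ forces $-\nabla f(x^{*})\in\partial h(x^{*})=\mathcal{N}_{X}(x^{*})$, so $\langle\nabla f(x^{*}),\pi_{X}(x^{k})-x^{*}\rangle\ge 0$ by the normal-cone inequality. After conditional expectation the middle piece becomes $\langle\nabla f(x^{*}),x^{k}-\pi_{X}(x^{k})\rangle$, which Cauchy--Schwarz followed by AM--GM with parameter $\tfrac{8\mu_k}{\kappa}$ bounds below by $-\tfrac{4\mu_k}{\kappa}\norm{\nabla f(x^{*})}^2-\tfrac{\kappa}{16\mu_k}\text{dist}_{X}^{2}(x^{k})$. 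Finally, the unused half of the budget, $\tfrac{1}{8\mu_k}\norm{x^{k+1}-x^{k}}^2$, is lower-bounded by $\tfrac{1}{8\mu_k}\text{dist}^{2}_{X_{\xi_k}}(x^{k})$ because $x^{k+1}\in X_{\xi_k}$, and linear regularity~\eqref{linear_regularity} promotes this to $\tfrac{\kappa}{8\mu_k}\text{dist}^{2}_{X}(x^{k})$; cancelling against the negative $-\tfrac{\kappa}{16\mu_k}\text{dist}^{2}_{X}(x^{k})$ contribution leaves exactly the stated coefficient.

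The main obstacle will be the bookkeeping of the $\tfrac{1}{4\mu_k}\norm{x^{k+1}-x^{k}}^2$ budget: splitting it precisely in half is essential so that one half absorbs the stochastic gradient fluctuation via~\eqref{elemineq_lowbound}, the other half supplies the distance term through linear regularity, and the AM--GM parameter on the middle piece has to be tuned so that its negative distance contribution is exactly half of what linear regularity delivers, yielding the target $\tfrac{\kappa}{16\mu_k}$ constant.
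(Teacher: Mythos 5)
Your proposal is correct and follows essentially the same route as the paper's proof: convexity at $x^*$ with the zero-mean subgradient from Assumption \ref{assump_basic}$(iv)$ plus the elementary bound \eqref{elemineq_lowbound} for part $(i)$, and for part $(ii)$ the same three-way decomposition through $\pi_X(x^k)$, the same half-and-half split of the quadratic budget, and the same use of linear regularity to extract $\tfrac{\kappa}{8\mu_k}\mathbb{E}[\text{dist}_X^2(x^k)]$ of which half is sacrificed to Young's inequality. The only cosmetic difference is that you obtain $\tfrac{1}{8\mu_k}\norm{x^{k+1}-x^k}^2\ge\tfrac{1}{8\mu_k}\text{dist}_{X_{\xi_k}}^2(x^k)$ directly from $x^{k+1}\in X_{\xi_k}$, whereas the paper passes through the Moreau envelope $h_{4\mu_k}(x^k;\xi_k)$ — these are the same bound for indicator functions.
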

\begin{proof}
In order to prove $(i)$ let $x^* \in X^*$ and $g_F(x^*;\xi) \in \partial F(x^*;\xi)$, by convexity of $f(\cdot;\xi)$ we have:
	\begin{align}
	 & F(x^{k+1};\xi_k) - F^* + \frac{1}{4\mu_k}\norm{x^{k+1} - x^k}^2 \nonumber\\
	& \ge \langle g_F(x^*;\xi_k), x^{k+1} - x^*  \rangle + \frac{1}{4\mu_k} \norm{x^{k+1} - x^k}^2  \nonumber\\
	& \ge \langle g_F(x^*;\xi_k), x^k - x^* \rangle + \langle g_F(x^*;\xi_k), x^{k+1} - x^k \rangle + \frac{1}{4\mu_k} \norm{x^{k+1} - x^k}^2.  \label{rel_for_(i)}
	\end{align}
	On one hand, based on Assumption \ref{assump_basic} $(iv)$, we observe that the first term of the right hand side has null expectation:
	\begin{align}\label{null_term_rhs}
	\mathbb{E} \left[\langle g_F(x^*;\xi_k),x^k-x^*\rangle \right] 
	& = \mathbb{E} \left[ \; \left\langle \mathbb{E} \left[g_F(x^*;\xi_k) \;|\; \Xi_k \right],x^k-x^* \right\rangle \right] \nonumber\\
	& \overset{\text{Assump.}\; \ref{assump_basic}(iv)}{=} \mathbb{E} \left[ \; \left\langle  g_F(x^*) ,x^k-x^* \right\rangle \right] = 0. 
	\end{align}
	On the other hand, for any $x^* \in X^*$, the second term can be lower bounded by \eqref{elemineq_lowbound}:
	\begin{align}\label{gradterm_rhs}
	& \langle g_F(x^*;\xi_k), x^{k+1} - x^k \rangle + \frac{1}{4\mu_k} \norm{x^{k+1} - x^k}^2 \ge - \mu_k  \norm{g_F(x^*;\xi_k)}^2.
	\end{align}
	We take full expectation (over the entire index history) in \eqref{rel_for_(i)} and use relations \eqref{null_term_rhs}-\eqref{gradterm_rhs} to get the first part $(i)$. 
	
	\vspace{10pt}
	
	\noindent To derive the second part $(ii)$, we proceed slightly different:
	\begin{align}
	& F(x^{k+1};\xi_k) - F^* + \frac{1}{4\mu_k}\norm{x^{k+1} - x^k}^2 \nonumber\\
	& = f(x^{k+1};\xi_k) - f^* + \frac{1}{8\mu_k}\norm{x^{k+1} - x^k}^2 +  h(x^{k+1};\xi_k) + \frac{1}{8\mu_k}\norm{x^{k+1} - x^k}^2 \nonumber\\
	& \ge f(x^{k+1};\xi_k) - f^* + \frac{1}{8\mu_k}\norm{x^{k+1} - x^k}^2 +  \min_z h(z;\xi_k) + \frac{1}{8\mu_k}\norm{z - x^k}^2 \nonumber\\
	&\ge \langle \nabla f(x^*;\xi_k), x^{k+1} - x^* \rangle + \frac{1}{8\mu_k}  \norm{x^{k+1} - x^k}^2 + h_{4\mu_k}(x^k;\xi_k)   \nonumber \\
	& = \langle \nabla f(x^*;\xi_k), x^k - x^* \rangle + \nonumber \\
	& \hspace{1cm} \langle \nabla f(x^*;\xi_k), x^{k+1} - x^k \rangle + \frac{1}{8\mu_k}  \norm{x^{k+1} - x^k}^2 + h_{4\mu_k}(x^k;\xi_k). \label{rel_for_(ii)}
	\end{align}
	We proceed to bound each term of right hand side, similarly as in $(i)$. By using the optimality condition (i.e. $\langle \nabla f(x^*), z - x^*\rangle \ge 0 $ for all $z \in X$), the expectation of the first term can be lower bounded as follows:
	\begin{align}
	\mathbb{E} & \left[\langle \nabla f(x^*;\xi_k),x^k-x^*\rangle \right] \nonumber\\
	& = \mathbb{E} \left[ \; \left\langle \mathbb{E} \left[\nabla f(x^*;\xi_k) \;|\; \Xi_k \right],x^k-x^* \right\rangle  \right] \nonumber\\
	& = \mathbb{E} \left[ \; \left\langle \nabla f(x^*), x^k - x^* \right\rangle  \right] \nonumber\\
	& = \mathbb{E} \left[ \; \langle \nabla f(x^*), \pi_X (x^k) - x^*  \rangle 	+ \langle \nabla f(x^*), x^k - \pi_X (x^k) \rangle  \right] \nonumber\\
	& \overset{O.C.}{\ge} \mathbb{E} \left[ \;  \langle \nabla f(x^*), x^k - \pi_X (x^k) \; \rangle  \right] \nonumber\\
	& \overset{C.S.}{\ge} \mathbb{E} \left[ \;  -\norm{f(x^*)} \text{dist}_X(x^k)  \right] \overset{}{\ge} -\norm{f(x^*)} \sqrt{\mathbb{E} \left[   \text{dist}_X^2(x^k)\right]}, \label{small_optcond_rhs}
	\end{align}
	where in the second inequality we used Cauchy-Schwartz inequality and in the last one we used $\mathbb{E}[Y] \le \sqrt{\mathbb{E}[Y^2]}$.
After taking full expectation in \eqref{rel_for_(ii)} and using \eqref{elemineq_lowbound}, \eqref{small_optcond_rhs} and the linear regularity property $h_{\mu}(x) := \frac{1}{2\mu} \mathbb{E}\left[ \text{dist}_{X_{\xi}}^2(x) \right] \ge \frac{\kappa}{2\mu} \text{dist}_X^2(x)$ yields the following:
	\begin{align*}
	\mathbb{E}&\left[F(x^{k+1};\xi_k)  - F^* + \frac{1}{4\mu_k}\norm{x^{k+1} - x^k}^2 \right] \\
	& \overset{\eqref{elemineq_lowbound}+\eqref{small_optcond_rhs}}{\ge} - 2\mu_k \mathbb{E}\left[  \norm{\nabla f(x^*;\xi)}^2 \right] -\norm{f(x^*)} \sqrt{\mathbb{E} \left[   \text{dist}_X^2(x^k)\right]} + \mathbb{E}[h_{4\mu_k}(x^k)] \nonumber  \\
	& \overset{lin. reg.}{\ge} - 2\mu_k\mathbb{E} [ \norm{\nabla f(x^*;\xi)}^2  -\norm{f(x^*)} \sqrt{\mathbb{E} \left[   \text{dist}_X^2(x^k)\right]} +  \frac{\kappa}{8\mu_k}\mathbb{E}[\text{dist}_X^2(x^k)] \nonumber  \\
	& \overset{\eqref{elemineq_lowbound}}{\ge} - 2\mu_k\mathbb{E}\left[\norm{\nabla f(x^*;\xi)}^2 \right] -  \frac{4\mu_k}{\kappa}\norm{\nabla f(x^*)}^2 + \frac{\kappa}{16\mu_k}\mathbb{E}[\text{dist}_X^2(x^k)].   \nonumber  
	\end{align*}
	In the last inequality we also used \eqref{elemineq_lowbound}. 
\end{proof}

\noindent Next we present the main recurrences which will finally generate our nonasymptotic convergence rates.
\begin{theorem}\label{th_reccurence}
Let Assumptions \ref{assump_basic} hold, then the SSPG sequence $\{x^k\}_{k \ge 0}$ satisfies:	
\begin{itemize}
	\item[$(i)$] Let $\mu_k \le \frac{1}{2L_f}$ for all $k \ge 0$, then:
\begin{align*}
\mathbb{E}[\norm{x^{k+1} - x^*}^2] \le  & (1 - \sigma_f \mu_k) \mathbb{E}[\norm{x^k-x^*}^2]  +  \mu_k^2 \Sigma,
\end{align*}
where $\Sigma = 2\mathbb{E} \left[  \norm{g_F(x^*;\xi)}^2\right]$.
	\item[$(ii)$] In particular, let $h(\cdot;\xi) = \mathbb{I}_{X_{\xi}}(\cdot)$ such that $\{X_{\xi}\}_{\xi \in \Omega}$ are linearly regular sets with constant $\kappa$. Then the following recurrence holds:
	\begin{align*}
	\mathbb{E}[\norm{x^{k+1} - x^*}^2] \le  & (1 - \sigma_f \mu_k) \mathbb{E}[\norm{x^k-x^*}^2]  +  \mu_k^2 \Sigma - \frac{\kappa}{8} \mathbb{E}[\text{dist}_X^2(x^k)],
	\end{align*}
	where $\Sigma = 4\mathbb{E}\left[\norm{\nabla f(x^*;\xi)}^2 \right] +  \frac{8}{\kappa}\norm{\nabla f(x^*)}^2$.
\end{itemize}
\end{theorem}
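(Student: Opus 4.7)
The proof is essentially a clean assembly of the two lemmas established just above the theorem, so the plan is to combine them rather than produce new estimates.

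For part $(i)$, my plan is to start from the recurrence in Lemma \ref{lemma_recurence}, which already isolates the quantity $2\mu_k\mathbb{E}[F(x^*) - F(x^{k+1};\xi_k) - \frac{1}{4\mu_k}\norm{x^{k+1}-x^k}^2]$ as the additive error term. Lemma \ref{lemma_auxres}$(i)$ provides precisely a lower bound of $-\mu_k\mathbb{E}[\norm{g_F(x^*;\xi)}^2]$ on the negation of this quantity; flipping the sign yields an upper bound of $\mu_k\mathbb{E}[\norm{g_F(x^*;\xi)}^2]$ on $\mathbb{E}[F(x^*)-F(x^{k+1};\xi_k)-\frac{1}{4\mu_k}\norm{x^{k+1}-x^k}^2]$. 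Substituting this into Lemma \ref{lemma_recurence} turns the factor $2\mu_k$ into a total $2\mu_k^2$, which is exactly $\mu_k^2\Sigma$ with $\Sigma = 2\mathbb{E}[\norm{g_F(x^*;\xi)}^2]$, as stated.

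For part $(ii)$, the strategy is identical but I substitute the sharper lower bound from Lemma \ref{lemma_auxres}$(ii)$, which is specialized to the indicator-regularizer case and exploits linear regularity. Negating that inequality and multiplying by $2\mu_k$ produces an upper bound containing three contributions: two that scale like $\mu_k^2$ (namely $4\mu_k^2\mathbb{E}[\norm{\nabla f(x^*;\xi)}^2]$ and $\frac{8\mu_k^2}{\kappa}\norm{\nabla f(x^*)}^2$), together with the negative contraction term $-\frac{\kappa}{8}\mathbb{E}[\text{dist}_X^2(x^k)]$ coming from the $\frac{\kappa}{16\mu_k}$ factor after multiplication by $2\mu_k$. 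Plugging this bound into Lemma \ref{lemma_recurence} gives the claimed inequality, with $\Sigma = 4\mathbb{E}[\norm{\nabla f(x^*;\xi)}^2] + \frac{8}{\kappa}\norm{\nabla f(x^*)}^2$.

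Since the two ingredients already did the work (Lemma \ref{lemma_recurence} handling the strong-convexity-based contraction via the stepsize choice $\mu_k\le\frac{1}{2L_f}$, and Lemma \ref{lemma_auxres} taming the variance-like remainder), there is essentially no obstacle: the only care needed is bookkeeping of signs when negating the lower bounds, and tracking the factor of $2\mu_k$ so that the $\frac{\kappa}{16\mu_k}$ term cleanly becomes the $\frac{\kappa}{8}$ contraction appearing in the statement. I would present the argument as two short paragraphs, one per item, each beginning with the invocation of Lemma \ref{lemma_recurence} and concluding with the substitution of the appropriate part of Lemma \ref{lemma_auxres}.
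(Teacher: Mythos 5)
Your proposal is correct and coincides with the paper's own (one-line) proof, which simply states that the theorem follows from Lemmas \ref{lemma_recurence} and \ref{lemma_auxres}; your constant tracking (the factor $2\mu_k$ turning $-\mu_k\mathbb{E}[\norm{g_F(x^*;\xi)}^2]$ into $\mu_k^2\Sigma$ and $\frac{\kappa}{16\mu_k}$ into $\frac{\kappa}{8}$) is exactly right.
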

\begin{proof}
The proof result straightforwardly from Lemmas \ref{lemma_recurence} and \ref{lemma_auxres}.
\end{proof}	
\begin{remark}
Consider deterministic setting $F(\cdot;\xi) = F(\cdot)$ and $\mu_k = \frac{1}{2L_f}$, then SSPG becomes the proximal gradient algorithm and Theorem \ref{th_reccurence}$(i)$ holds with $ g_F(x^*;\xi) = g_F(x^*) = 0$, implying that $\Sigma = 0$. Thus the well-known iteration complexity estimate  $\mathcal{O}\left(\frac{L_f}{\sigma_f} \log(1/\epsilon) \right)$ \cite{Nes:13,BecTeb:09} of proximal gradient algorithm is recovered up to a constant from Theorem \ref{th_reccurence}.
\end{remark}
The above recurrences generates immediately the following convergence rates.


\begin{corrolary}\label{main_corrolary}
Under Assumption \ref{assump_basic}, the following convergence rates hold: 

\noindent $(i)$ Let $\mu_k = \frac{1}{k^{\gamma}}, \gamma \in (0,1)$ then:\quad  $\mathbb{E}[\norm{ x^{k} - x^*}^2]
\le \mathcal{O}\left(\frac{1}{k^{\gamma}}\right) $
\begin{align*}
\noindent (ii) \; \text{Let} \; \mu_k = \frac{1}{k}, \text{then} : \quad 
\mathbb{E}[\norm{ x^{k} - x^*}^2] \le
\begin{cases}
\mathcal{O}\left(\frac{1}{ k}\right)     & \text{if} \; \mu_0\sigma_{f} > e-1 \\
\mathcal{O}\left(\frac{\ln{k}}{ k}\right)     & \text{if} \; \mu_0\sigma_{f}>e-1\\
\mathcal{O}\left(\frac{1}{k}\right)^{2 \ln
	(1 + \mu_0\sigma_{f})}  & \text{if} \; \mu_0\sigma_{f} < e-1.
\end{cases}
\end{align*}
\noindent $(iii)$ For constant stepsize $\mu_k = \mu > 0$, the recurrence from Theorem \ref{th_reccurence}$(i)$ implies:
\begin{align}
\mathbb{E}[\norm{x^{k} - x^*}^2] 
&\le (1-\mu \sigma_f)^k \norm{x^0-x^*}^2 + \frac{\mu}{\sigma_f}\Sigma, \label{rate_ctstep}
\end{align}
where $\Sigma = 2\mathbb{E} \left[  \norm{g_F(x^*;\xi)}^2\right]$.

\noindent $(iv)$ Moreover, consider convex feasibility problem where $f = 0$, $h(\cdot;\xi) = \mathbb{I}_{X_{\xi}}(\cdot)$ with  $\kappa-$linearly regular sets $\{X_{\xi}\}_{\xi \in \Omega}$ and constant stepsize $\mu_k = \mu$. Then the SSPG sequence $\{x^k\}_{k \ge 0}$ converges linearly as follows:
\begin{align*}
\mathbb{E}[\text{dist}_X^2(x^{k})] 
&\le \left(1 - \frac{\kappa}{8} \right)^k \text{dist}_X^2(x^0). \end{align*}
\end{corrolary}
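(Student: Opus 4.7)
My plan is to derive all four assertions as consequences of the scalar recurrences provided by Theorem \ref{th_reccurence}. Let me abbreviate $r_k = \mathbb{E}[\norm{x^k-x^*}^2]$, so that part $(i)$ of that theorem reads $r_{k+1} \le (1-\sigma_f\mu_k)\,r_k + \mu_k^2 \Sigma$, which is a standard affine contraction in a positive scalar sequence.

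I would start with part $(iii)$, which is the shortest: constant $\mu_k\equiv\mu$ turns the recurrence into a contraction with factor $1-\mu\sigma_f\in(0,1)$, and a single unrolling bounds the accumulated noise by the geometric sum $\mu^2\Sigma\sum_{j=0}^{k-1}(1-\mu\sigma_f)^j \le \mu\Sigma/\sigma_f$, which is exactly \eqref{rate_ctstep}. Part $(i)$, with $\mu_k=1/k^\gamma$ and $\gamma\in(0,1)$, I would handle by induction with ansatz $r_k \le C/k^\gamma$. Plugging into the master recurrence and using the elementary inequality $(k+1)^{-\gamma} \ge k^{-\gamma}(1-\gamma/k)$ reduces the inductive step to an algebraic condition on $C$, which is satisfied once $C$ dominates both $r_0$ and a suitable multiple of $\Sigma/\sigma_f$.

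Part $(ii)$ is the most delicate. The choice $\mu_k=1/k$ produces $r_{k+1}\le (1-\sigma_f/k)\,r_k + \Sigma/k^2$, whose asymptotic rate depends on the product $\mu_0\sigma_f$ relative to the threshold $e-1$. The three regimes correspond to the fast case $\mu_0\sigma_f>e-1$ giving $\mathcal{O}(1/k)$, the borderline case $\mu_0\sigma_f=e-1$ producing the extra $\ln k$ factor (this appears to be a typo in the statement, where both top branches carry the same strict inequality), and the slow case $\mu_0\sigma_f<e-1$ where the exponent degrades to $2\ln(1+\mu_0\sigma_f)$. I would prove each by induction with a case-dependent ansatz of the form $r_k \le C/k^\alpha$, tuning $\alpha$ to each regime. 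The computations are classical (as in \cite{PatNec:18}) and the only real obstacle is the case bookkeeping rather than any new idea.

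For part $(iv)$, I would specialize Theorem \ref{th_reccurence}$(ii)$ to $f\equiv 0$ and $h(\cdot;\xi)=\mathbb{I}_{X_\xi}(\cdot)$, in which case both $\sigma_f$ and $\Sigma$ vanish. Read conditionally on $\Xi_k$, the recurrence becomes $\mathbb{E}[\norm{x^{k+1}-x^*}^2 \mid \Xi_k] \le \norm{x^k-x^*}^2 - (\kappa/8)\,\text{dist}_X^2(x^k)$ for every $x^* \in X^* = X$. The decisive trick is to take $x^* = \pi_X(x^k)$, which is admissible because this point is $\Xi_k$-measurable: the right-hand side then collapses to $(1-\kappa/8)\,\text{dist}_X^2(x^k)$, and the inequality $\text{dist}_X^2(x^{k+1}) \le \norm{x^{k+1}-\pi_X(x^k)}^2$ upgrades the left-hand side to an upper bound on $\mathbb{E}[\text{dist}_X^2(x^{k+1}) \mid \Xi_k]$. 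Taking full expectation and iterating yields the linear rate $(1-\kappa/8)^k\,\text{dist}_X^2(x^0)$.
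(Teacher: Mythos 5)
Your parts $(iii)$ and $(iv)$ coincide with the paper's argument: unroll the constant-step recurrence of Theorem \ref{th_reccurence}$(i)$ and bound the noise by the geometric series $\mu^2\Sigma\sum_j(1-\mu\sigma_f)^j\le\mu\Sigma/\sigma_f$; then, for the CFP case, specialize Theorem \ref{th_reccurence}$(ii)$ with $\sigma_f=0$, $\Sigma=0$, pick $x^*=\pi_X(x^k)$ and use $\text{dist}_X(x^{k+1})\le\norm{x^{k+1}-\pi_X(x^k)}$. Your observation that the choice $x^*=\pi_X(x^k)$ must be justified through the conditional (on $\Xi_k$) version of the recurrence, because this point is random, is a genuine improvement in rigor over the paper, which applies the full-expectation inequality to a $k$-dependent $x^*$ without comment. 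For $(i)$ you diverge from the paper: it does not use induction but an explicit unrolling, bounding $\prod_i(1-\mu_0\sigma_f/i^{\gamma})$ via the Bernoulli inequality $1-tx\le(1+x)^{-t}$, estimating $\sum_i i^{-\gamma}$ by integrals, and splitting the noise sum at $m\approx k/2$. Your induction with ansatz $r_k\le C/k^{\gamma}$ also works, but the inductive step requires $C(\sigma_f-\gamma k^{\gamma-1})\ge\Sigma$, which only holds for $k$ beyond a threshold depending on $\sigma_f$ and $\gamma$; you should start the induction there and absorb the finitely many earlier iterates into $C$. That is a standard fix, so I count $(i)$ as correct by a different route.

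The real issue is $(ii)$. The three branches of the statement — the threshold $\mu_0\sigma_f\gtrless e-1$ and the exponent $2\ln(1+\mu_0\sigma_f)$ — are not the classical trichotomy; they are artifacts of the paper's specific machinery, namely the bound $\prod_{i\le k}(1-\mu_0\sigma_f/i)\le(1+\mu_0\sigma_f)^{-\sum_i 1/i}\approx k^{-\ln(1+\mu_0\sigma_f)}$, which turns the condition ``exponent $>1$'' into $\mu_0\sigma_f>e-1$. An induction with ansatz $C/k^{\alpha}$, as you propose, naturally produces the classical regimes with threshold $\mu_0\sigma_f=1$ and slow-regime exponent $\mu_0\sigma_f$; since $2\ln(1+x)>x$ on $(0,e-1)$, the rate $\mathcal{O}(k^{-\mu_0\sigma_f})$ you would obtain does \emph{not} imply the third branch as written, so your plan does not prove the statement verbatim (indeed, the homogeneous part of the recurrence decays only like $k^{-\mu_0\sigma_f}$, so the printed exponent appears to overstate the rate — likely the factor $2$ is an error and the intended exponent is $\ln(1+\mu_0\sigma_f)$, which your classical bound would dominate). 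You correctly spotted the duplicated condition in the first two branches, but to reproduce the branches as printed you would have to follow the paper's Bernoulli-plus-integral route rather than a plain induction; as it stands, your sketch for $(ii)$ proves a correct (and in places stronger) result, not the literal claim.
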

\begin{proof}
The proof for the first two results $(i)$ and $(ii)$ follows similar lines with \cite[Corrolary 15]{PatNec:18}. However, for completeness we present it in appendix. 	

\vspace{5pt}

\noindent For $(iii)$, notice that Theorem \ref{th_reccurence}$(i)$ straightforwardly implies:
	\begin{align}
	\mathbb{E}[\norm{x^{k} - x^*}^2] & \le (1 - \sigma_f \mu) \mathbb{E}[\norm{x^{k-1}-x^*}^2]  +  \mu^2 \Sigma \nonumber \\
	& \le (1-\mu \sigma_f)^k \norm{x^0-x^*}^2 + \mu^2 \Sigma\sum\limits_{i=0}^{k-1} (1-\mu\sigma_f)^{i} \nonumber\\
	&\le (1-\mu \sigma_f)^k \norm{x^0-x^*}^2 + \frac{\mu}{\sigma_f}\Sigma [1-(1-\mu\sigma_f)^k] \nonumber \\ 
	&\le (1-\mu \sigma_f)^k \norm{x^0-x^*}^2 + \frac{\mu}{\sigma_f}\Sigma. \nonumber 
	\end{align}
Now, let CFP settings $f=0, h(\cdot;\xi) = \mathbb{I}_{X_{\xi}}(\cdot)$ hold. Theorem \ref{th_reccurence}$(ii)$ states that:
\begin{align*}
\mathbb{E}[\norm{x^{k+1} - x^*}^2] \le  & \mathbb{E}[\norm{x^k-x^*}^2] - \frac{\kappa}{8} \mathbb{E}[\text{dist}_X^2(x^k)] \quad \forall x^* \in X^*.
\end{align*}	
Since in this case, $X = \bigcap_{\xi} X_{\xi} = X^*$, then by choosing  $x^* = \pi_X(x^k)$ and using in the LHS that $\norm{x^{k+1}- \pi_X(x^k)} \ge \text{dist}_X(x^{k+1})$, then we obtain:
\begin{align*}
\mathbb{E}[\text{dist}_X^2(x^{k+1})] \le \mathbb{E}[\norm{x^{k+1} - \pi_X(x^k)}^2] \le & \left(1 - \frac{\kappa}{8} \right) \mathbb{E}[\text{dist}_X^2(x^k)],
\end{align*}	
which yields the linear convergence rate of SSPG.
\end{proof}
\begin{remark}	
\noindent \noindent Although sublinear $\mathcal{O}(1/k)$ convergence rates for strongly convex objectives are typically obtained in literature for many first-order stochastic schemes \cite{MouBac:11,PatNec:18}, these rates are novel  due to their generalization potential.

\noindent Regarding second rate $(ii)$, although it expresses a geometric decrease of the initial residual term, this rate states that, after $\mathcal{O}\left(\frac{1}{\mu \sigma_f} \log \left(\frac{1}{\epsilon}\right) \right)$ iterations, the sequence $\{x^k\}_{k \ge 0}$ will remain (in expectation) in a bounded neighborhood of the optimal point $\{x: \norm{x - x^*}^2 \le \frac{\mu}{\sigma_f}\Sigma\}$. This fact suggests that only sufficiently small constant stepsizes guarantee the convergence of SSPG sequence.

\noindent In the convex feasibility setting, SSPG reduces to Randomized Alternating Projections algorithm for which the obtained linear rate obeys the rates from the literature up to a constant. However, we believe that using some refinements of proof arguments there might be obtained optimal rates w.r.t. the constants.


\section{Application to Parametric Sparse Representation}

Sparse representation~\cite{Ela:10}
starts with the given signal $y \in \rset^m$
and aims to find the sparse signal $x\in\rset^n$
by projecting $y$ to a much smaller subspace
through the overcomplete dictionary $T \in \rset^{m \times n}$. Among the many Dictionary Learning techniques, we focus on the multi-parametric cosparse model proposed in \cite{StoIro:19}, since it could efficiently illustrates the empirical capabilities of SSPG.
The multi-parametric cosparse  representation problem is given by:
\be
\begin{aligned}
	& \underset{x}{\min}
	& & \|Tx-y\|^2_2 \\
	& \text{s.t.}
	& & \|\Delta x\|_1 \le \delta,
\end{aligned}
\label{eq:optprob}
\ee
where $T$ and $x$ correspond to the dictionary and, respectively, the resulting sparse representation,
with sparsity being imposed on a scaled subspace $\Delta x$ with $\Delta\in\rset^{p\times n}$.
In pursuit of \eref{problem_intro},
we move to the exact penalty problem $\min_x \frac{1}{2m}\norm{Tx - y}^2_2 + \lambda \norm{\Delta x }_1$. In order to limit the solution norm we further regularize the unconstrained objective using an $\ell_2$ term as follows:
\be
\begin{aligned}
	& \underset{x}{\min} & & \frac{1}{2m}\|Tx-y\|^2_2 +
	\lambda\|\Delta x\|_1 +
	\frac{\alpha}{2} \|x\|^2_2.
\end{aligned}
\label{eq:strongprob}
\ee
The decomposition which puts the above formulation into model \eqref{problem_intro} consists of:
\be
f(x;\xi) = \frac{1}{2} (T_\xi x-y_\xi)^2_2 + 
\frac{\alpha}{2} \|x\|^2_2
\label{eq:sp_smooth}
\ee
where $T_\xi$ represents line $\xi$ of matrix $T$, and 
\be
h(x; \xi) = m\lambda |\Delta_\xi x|.
\label{eq:sp_nonsmooth}
\ee
To compute the SSPG iteration for the sparse representation problem, we note that
the proximal operator $\text{prox}_{h,\mu}(x;\xi) =   \arg\min\limits_{z \in \rset^n} m \lambda |\Delta_\xi z| + \frac{1}{2\mu} \norm{z - x}^2$ is given by
\begin{align*}
	\text{prox}_{h,\mu}(x;\xi) &   = 
	\begin{cases}
		x - \frac{\Delta_{\xi}x}{\norm{\Delta_{\xi}}^2 } \Delta_\xi^T \; & \text{if} \; \frac{|\Delta_\xi x|}{m\lambda \norm{\Delta_\xi}^2} \le \mu \\
		x - \mu m \lambda \text{sgn}(\Delta_{\xi}x) \Delta_\xi^T \; & \text{if} \; \frac{|\Delta_\xi x|}{m\lambda \norm{\Delta_\xi}^2} > \mu \\
	\end{cases}
\end{align*}
Also, the gradient of the smooth function $f(\cdot;\xi)$ is given by
\begin{align*}
	\nabla f(x; \xi) =  \left(T_{\xi}^T T_\xi + \alpha I_n \right)x - T_{\xi}^T y_\xi
\end{align*}
and with that we are ready to formulate the resulting particular variant of SSPG.
\begin{flushleft}
	\textbf{SSPG - Sparse Representation  (SSPG-SR)}: \quad For $k\geq 0$ compute \\
	1. Choose randomly $\xi_k \in \Omega$ w.r.t. probability distribution $\mathbb{P}$\\
	2. Update: 
	\begin{align*}
		y^{k}  & =  \left[ I_n - \mu_k \left(T_{\xi_k}^T T_{\xi_k} + \alpha I_n \right)\right] x^k + \mu_k T_{\xi_k}^T y_{\xi_k}, \quad \beta_k = \frac{\Delta_{\xi_k}y^k}{\norm{\Delta_{\xi_k}}^2} \\
		x^{k+1}  & =  \begin{cases}
			y^k - \beta_k  \Delta_{\xi_k}^T \; & \text{if} \; |\beta_k| \le m\lambda \mu \\
			y^k - \mu m \lambda \text{sgn}(\beta_k) \Delta_{\xi_k}^T \; & \text{if} \; |\beta_k| >  m\lambda \mu\\
		\end{cases}	
	\end{align*}
	3. If the stopping criterion holds, then \textbf{STOP}, otherwise $k = k+1$.
\end{flushleft}

\noindent We use randomly generated data with a standard normal distribution. \footnotemark \footnotetext{Data generating code available at https://github.com/pirofti/SSPG  }	 In the following we present numerical simulations that show-case the application of SSPG to the multi-parametric sparse representation problem and compare it to CVX and Proximal Gradient method (denoted { \verb prox }) \cite{Nes:13}. Notice that at each iteration Proximal Gradient method requires, given current point $x^k_{PG}$, the computation of: 
\begin{itemize}
\item gradient $\nabla f(x^k_{PG}) = \frac{1}{m}T^T(Tx^k_{PG}-y)$ 
\item proximal update $z(x^k_{PG}) = \arg\min_z \lambda\norm{\Delta z}_1 + \frac{L}{2}\norm{z-(x^k_{PG} - \frac{1}{L}\nabla f(x^k_{PG}))}^2$
\end{itemize} 

\pdffig[\textwidth]{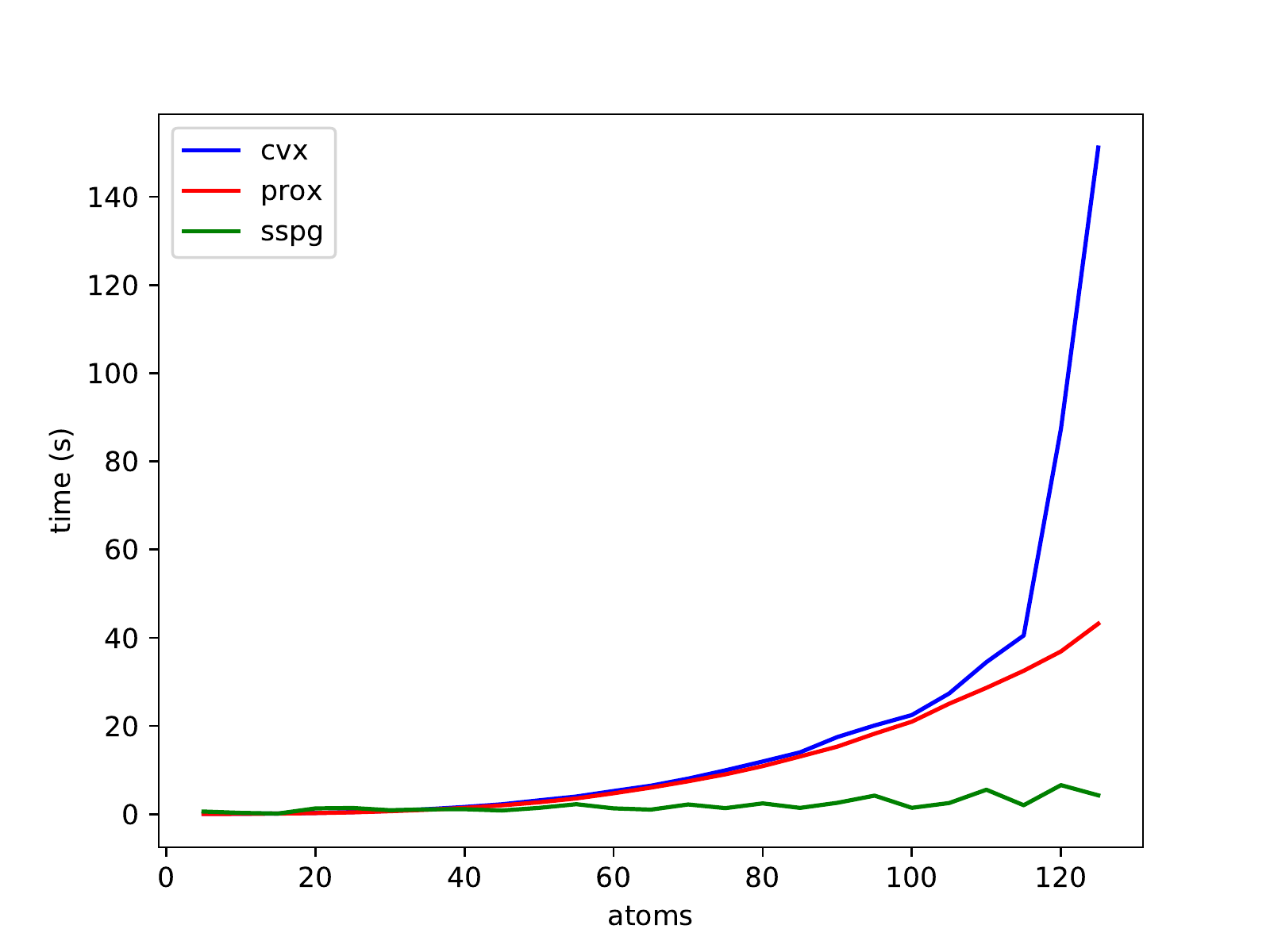}
{Increasing problem dimension.
	Features are 4 times the number of atoms.
	($\alpha=0.5$, $\lambda=5$, $\varepsilon=10^{-6}$)}
{fig:sspg_atoms}

Our first experiment investigates the impact of the problem size on execution time.
We vary the atoms in the dictionary starting from $n=5$ up to $n=125$ in increments of 5
and maintain a ratio of 4 features per atom such that $m=4n$.
First CVX is used to determine $x^\star$ within a $\varepsilon=10^{-6}$ margin.
Then Proximal Gradient method and SSPG-SR are executed until $x^\star$ is reached with the same approximation error.
Figure \ref{fig:sspg_atoms} depicts the results.
The experiment is stopped after $n=125$ when CVX execution times become too large.
We can clearly see a similar yet gentler increasing trend for Proximal Gradient method (prox) and indeed other experiments have shown it to reach an impas shortly after.
SSPG-SR on the other hand maintains a steady pace throughout the tests.

\pdffig[\textwidth]{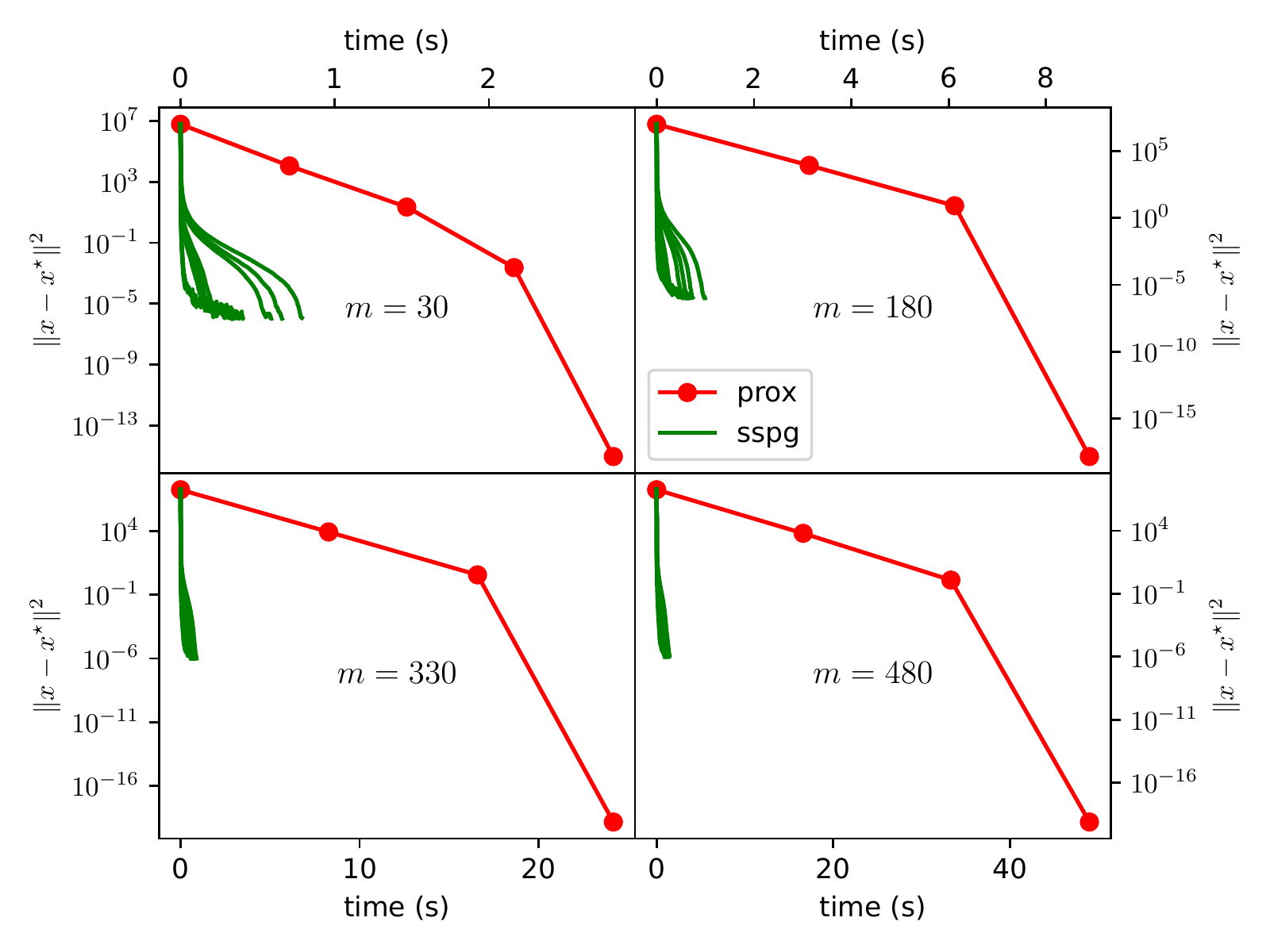}
{Proximal Gradient method versus 10 rounds of SSPG iterations.
	Features are 6 times the number of atoms.
	($\alpha=0.2$, $\lambda=5\cdot10^{-4}$, $\varepsilon=10^{-6}$)}
{fig:sspg_iters}

Next, we focus on the iterations of Proximal Gradient method and SSGP-SR.
To this end we design a similar experiment consisting of 4 tests
where we vary the problem size by increasing the number of features
with a ratio of 6 features per atom ($m=6n$).
As before,
we use CVX to determine $x^\star$ with $\varepsilon=10^{-6}$.
With identical parameters and initialization
we execute 10 rounds of SSPG-SR and compare them to the Proximal Gradient method iterations.
We time the execution of each iteration and record its progress $\norm{x^k-x^\star}$.
The result is shown in Figure \ref{fig:sspg_iters}.
In all 4 panels it is clearly seen that
the 10 SSPG-SR rounds perform much faster than Proximal Gradient method at least in a first phase.
Further increases of the problem dimension lead to a stall in Proximal Gradient method.
While SSPG-SR reaches a solution in less than a second in all the tests,
Proximal Gradient method goes from 5 seconds in the first, to 9, 24 and 50 in the last.
We note that even though the stopping criterion was the same for both methods ($\varepsilon=10^{-6})$,
Proximal Gradient method gets us much closer to $x^\star$ because, in our parameter setting, the SR problem is well-conditioned.

\section{Conclusion}

\noindent In this paper we presented preliminary guarantees for stochastic gradient schemes with stochastic proximal updates, which unify some well-known schemes in the literature. For future work, would be interesting to analyze convergence rate on (non)convex functions satisfying more relaxed convexity conditions (i.e. quadratic growth) and the empirical behaviour of SSPG scheme under various stepsize choices.

\end{remark}

\bibliographystyle{plain}
\bibliography{sppg}

\section{Appendix}

\begin{proof}[of Corollary \ref{main_corrolary}]
	For simplicity denote $\theta_k = (1 - \mu_k\sigma_{f})$ , then Theorem \ref{th_reccurence} implies that:
	\begin{align*}
	\mathbb{E}\left[\norm{x^{k+1}-x^*}^2 \right] 
	& \le  \left(\prod_{i=0}^k \theta_i\right) \norm{x^0-x^*}^2 + \Sigma \sum\limits_{i=0}^k \left(\prod\limits_{j=i+1}^{k} \theta_j\right) \mu_i^2.
	\end{align*}
	By using the Bernoulli inequality $ 1- tx \le \frac{1}{1 + tx} \le (1 + x)^{-t}$ for $t \in [0,1], x \ge 0$, then we have:
	\begin{align}\label{bern_conseq1}
	\prod\limits_{i=l}^u \theta_i 
	& = \prod\limits_{i=l}^u \left(1 - \frac{\mu_0}{i^{\gamma}} \sigma_{f}\right)  \le \prod\limits_{i=l}^u (1 + \mu_0 \sigma_f)^{-1/i^{\gamma}} 
	=  (1 + \mu_0 \sigma_{f})^{- \sum\limits_{i=l}^u \frac{1}{i^{\gamma}}}.
	\end{align}
	On the other hand, if we use the lower bound
	\begin{align}\label{bern_conseq2}
	\sum\limits_{i=l}^u \frac{1}{i^{\gamma}} \ge \int\limits_{l}^{u + 1} \frac{1}{\tau^{\gamma}} d\tau =  \varphi_{1-\gamma}(u+1) - \varphi_{1-\gamma}(l).
	\end{align}
	then we can finally derive:
	\begin{align*}
	& \sum\limits_{i=0}^k \left(\prod\limits_{j=i+1}^{k} \theta_j\right) \mu_i^2  = \sum\limits_{i=0}^m \left(\prod\limits_{j=i+1}^{k} \theta_j\right) \mu_i^2 + \sum\limits_{i=m+1}^k \left(\prod\limits_{j=i+1}^{k} \theta_j\right) \mu_i^2\\
	& \overset{\eqref{bern_conseq1} + \eqref{bern_conseq2}}{\le} \sum\limits_{i=0}^m (1 + \mu_0 \sigma_f)^{  \varphi_{1-\gamma}(i+1) - \varphi_{1-\gamma}(k)  } \mu_i^2 + \mu_{m+1} \sum\limits_{i=m+1}^k \left[\prod\limits_{j=i+1}^{k} (1 - \mu_j\sigma_f) \right] \mu_i \\
	& \le  (1 + \mu_0 \sigma_f)^{  \varphi_{1-\gamma}(m) - \varphi_{1-\gamma}(k)  } \sum\limits_{i=0}^m  \mu_i^2\\ & \hspace{3cm} + \frac{\mu_{m+1}}{\sigma_f} \sum\limits_{i=m+1}^k \left[\prod\limits_{j=i+1}^{k} (1 - \mu_j\sigma_f) \right] (1 - (1- \sigma_f\mu_i)) \\
	& = (1 + \mu_0 \sigma_f)^{  \varphi_{1-\gamma}(m) - \varphi_{1-\gamma}(k)  } \mu_0^2 \sum\limits_{i=0}^m  \frac{1}{i^{2\gamma}} \\    & \hspace{3cm} + \frac{\mu_{m+1}}{\sigma_f} \sum\limits_{i=m+1}^k \left[\prod\limits_{j=i+1}^{k} (1 - \mu_j\sigma_f)  - \prod\limits_{j=i}^{k} (1 - \mu_j\sigma_f) \right] \\
	& \le (1 + \mu_0 \sigma_f)^{  \varphi_{1-\gamma}(m) - \varphi_{1-\gamma}(k)  } \frac{m^{1- 2\gamma} - 1}{1 - 2\gamma} + \frac{\mu_{m+1}}{\sigma_f} \left[1  - \prod\limits_{j=m+1}^{k} (1 - \mu_j\sigma_f) \right] \\
	& \le (1 + \mu_0 \sigma_f)^{  \varphi_{1-\gamma}(m) - \varphi_{1-\gamma}(k)  } \varphi_{1 - 2\gamma}(m) + \frac{\mu_{m+1}}{\sigma_f}.
	\end{align*}
	By denoting the second constant $\tilde{\theta}_0 = \frac{1}{1+\mu_0 \sigma_f}$, then the last relation implies the following bound:
	\begin{align*}
	\mathbb{E}\left[\norm{x^{k+1}-x^*}^2\right] 
	\le \tilde{\theta}_0^{\varphi_{1-\gamma}(k)} \norm{x^{0}-x^*}^2 + \tilde{\theta}_0^{  \varphi_{1-\gamma}(k) - \varphi_{1-\gamma}(m)  } \varphi_{1 - 2\gamma}(m)\Sigma + \frac{\mu_{m+1}}{\sigma_f} \Sigma.
	\end{align*}
	Denote $r_k^2 = \mathbb{E}[\norm{x^k-x^*}^2]$.
	To derive an explicit convergence rate order we analyze upper bounds on function $\phi$. 
	
	\noindent $(i) \; $  First assume that $\gamma \in (0, \frac{1}{2})$. This implies that $1 - 2\gamma > 0$ and that:
	\begin{align}\label{corr_main_sc_est1}
	\varphi_{1-2\gamma}\left(\left\lfloor \frac{k}{2} \right \rfloor \right) \le \varphi_{1-2\gamma}\left(\frac{k}{2}\right) = \frac{\left(\frac{k}{2} \right)^{1-2\gamma} - 1}{1-2\gamma}\le \frac{\left(\frac{k}{2}  \right)^{1-2\gamma}}{1-2\gamma}.
	\end{align}
	On the other hand, by using the inequality $e^{-x} \le \frac{1}{1 + x}$ for all $x \ge 0$, we obtain:
	\begin{align*}
	& \tilde{\theta}_0^{\varphi_{1-\gamma}(k) - \varphi_{1-\gamma}(\frac{k-2}{2})}
	\varphi_{1-2\gamma}\left(\frac{k}{2}\right)
	=  e^{(\varphi_{1-\gamma}(k) - \varphi_{1-\gamma}(\frac{k-2}{2}))\ln {\tilde{\theta}_0}} \varphi_{1-2\gamma}\left(\frac{k}{2} \right) \\
	&\le \frac{\varphi_{1-2\gamma}\left(\frac{k}{2} \right)}{1 +
		[\varphi_{1-\gamma}(k) -
		\varphi_{1-\gamma}(\frac{k}{2}-1)]\ln{\frac{1}{\tilde{\theta}_0}}}
	\overset{\eqref{corr_main_sc_est1}}{\le} \frac{\frac{k^{1-2\gamma}}{2^{1-2\gamma} (1-2\gamma)}  }{\frac{1}{1-\gamma}[k^{1-\gamma} - (\frac{k}{2}-1)^{1-\gamma}]\ln{\frac{1}{\tilde{\theta}_0}}} \\
	& = \frac{\frac{k^{1-2\gamma}}{2^{1-2\gamma }
			(1-2\gamma)}}{\frac{k^{1-\gamma}}{1-\gamma}[1
		- (\frac{1}{6})^{1-\gamma}]\ln{\frac{1}{\tilde{\theta}_0}}}
	= \frac{1-\gamma}{1-2\gamma}\frac{2^{\gamma}k^{-\gamma}}{2^{1-2\gamma}[1 - (\frac{1}{6})^{1-\gamma}]\ln{\frac{1}{\theta_0}}} = \mathcal{O}\left( \frac{1}{k^{\gamma}}\right).
	\end{align*}
	Therefore, in this case, the overall rate will be given by:
	$$r_{k+1}^2 \le \theta_0^{\mathcal{O}(k^{1-\gamma})}r_0^2 + \mathcal{O}\left( \frac{1}{k^{\gamma}}\right) \approx\mathcal{O}\left( \frac{1}{k^{\gamma}}\right) .$$
	If $\gamma = \frac{1}{2}$, then the definition of
	$\varphi_{1-2\gamma}(\frac{k}{2})$  provides that:
	$$r_{k+1}^2 \le \tilde{\theta}_0^{\mathcal{O}(\sqrt{k})}r_0^2 + \tilde{\theta}_0^{\mathcal{O}(\sqrt{k})}\mathcal{O}(\ln{k}) +  \mathcal{O}\left( \frac{1}{\sqrt{k}}\right) \approx\mathcal{O}\left( \frac{1}{\sqrt{k}}\right) .$$
	When $\gamma \in (\frac{1}{2}, 1)$, it is obvious that
	$\varphi_{1-2\gamma}\left(\frac{k}{2}\right) \le \frac{1}{2\gamma
		- 1}$ and therefore the order of the convergence rate  changes into:
	$$r_{k+1}^2 \le \tilde{\theta}_0^{\mathcal{O}(k^{1-\gamma})}[r_0^2 + \mathcal{O}(1)] + \mathcal{O}\left( \frac{1}{k^{\gamma}}\right) \approx\mathcal{O}\left( \frac{1}{k^{\gamma}}\right).$$
	
	\noindent $(ii)\;$ Lastly, if $\gamma = 1$, by using $\tilde{\theta}_0^{\ln{k+1}} \le
	\left(\frac{1}{k}\right)^{\ln{\frac{1}{\tilde{\theta}_0}}}$ we obtain the
	second part of our result.
\end{proof}

\end{document}